\title[Herman-Avila-Bochi formula for higher dimensional cocycles]
{A Herman-Avila-Bochi formula for higher dimensional pseudo-unitary and hermitian-symplectic Cocycles}
\author{Christian Sadel}
\address{University of British Columbia, 1984 Mathematics Road, Vancouver, BC, \mbox{V6T\,1Z2}, Canada}
 \email{csadel@math.ubc.ca}
\newtheorem{theorem}{Theorem}
\newtheorem{lemma}[theorem]{Lemma}
\newtheorem{coro}[theorem]{Corollary}
\newtheorem{prop}[theorem]{Proposition}
\newtheorem{defini}[theorem]{Definition}
\newtheorem*{rem}{Remark}
\newcommand{\C}{{\mathbf C}}
\newcommand{\Y}{{\mathbf Q}}
\newcommand{\Aa}{{\mathcal A}}
\newcommand{\Bb}{{\mathcal B}}
\newcommand{\Cc}{{\mathcal C}}
\newcommand{\Dd}{{\mathcal D}}
\newcommand{\Gg}{{\mathcal G}}
\newcommand{\Ii}{{\mathcal I}}
\newcommand{\Jj}{{\mathcal J}}
\newcommand{\Ll}{{\mathcal L}}
\newcommand{\Qq}{{\mathcal Q}}
\newcommand{\Rr}{{\mathcal R}}
\newcommand{\Tt}{{\mathcal T}}
\newcommand{\Uu}{{\mathcal U}}
\newcommand{\CC}{{\mathbb C}}
\newcommand{\DD}{{\mathbb D}}
\newcommand{\GG}{{\mathbb G}}
\newcommand{\NN}{{\mathbb N}}
\newcommand{\RR}{{\mathbb R}}
\newcommand{\Sb}{{\mathbb S}}
\newcommand{\VV}{{\mathbb V}}
\newcommand{\WW}{{\mathbb W}}
\newcommand{\XX}{{\mathbb X}}
\newcommand{\ZZ}{{\mathbb Z}}
\newcommand{\Aaa}{{\mathfrak A}}
\newcommand{\one}{{\bf 1}}
\newcommand{\nul}{{\bf 0}}
\newcommand{\qtx}[1]{\quad\text{#1}\quad}
\newcommand{\Mat}{{\rm Mat}}
\newcommand{\SL}{{\rm SL}}
\newcommand{\GL}{{\rm GL}}
\newcommand{\Sp}{{\rm Sp}}
\newcommand{\Her}{{\rm Her}}
\newcommand{\HSp}{{\rm HSp}}
\newcommand{\Un}{{\rm U}}
\newcommand{\pmat}[1]{\begin{pmatrix} #1  \end{pmatrix}}
\newcommand{\smat}[1]{\left( \begin{smallmatrix} #1  \end{smallmatrix} \right)}
\newcommand{\diag}{{\rm diag}}
\numberwithin{equation}{section}
\numberwithin{theorem}{section}
\begin{document}
\begin{abstract}
A Herman-Avila-Bochi type formula is obtained for the
average sum of the top $d$ Lyapunov exponents over a one-parameter family of
$\GG$-cocycles, where $\GG$ is the group that leaves a certain, non-degenerate hermitian form 
of signature $(c,d)$ invariant.
The generic example of such a group is the pseudo-unitary group $\Un(c,d)$ or in the case $c=d$, the hermitian-symplectic
group $\HSp(2d)$ which naturally appears for cocycles related to Schr\"odinger operators.
In the case $d=1$, the formula for $\HSp(2d)$ cocycles reduces 
to the Herman-Avila-Bochi formula for $\SL(2,\RR)$ cocycles.
\end{abstract}

\maketitle

\setcounter{tocdepth}{1}

\tableofcontents

\section{Introduction}

A fundamental problem in the Theory of dynamical systems is the explicit calculation of Lyapunov exponents.
The Herman-Avila-Bochi formula for $\SL(2\RR)$ cocycles is a remarkable formula giving an average of Lyapunov exponents
over a family of $\SL(2,\RR)$ cocycles.
It was first partly obtained as an inequality by Herman \cite{Her} and 
later proved to be an equality by Avila and Bochi \cite{AB}.
More recently, a different proof was given in \cite{BDD}.

I will show that there is a Herman-Avila-Bochi type formula 
for $\GG$-cocycles where $\GG$ is a matrix group that leaves a non-degenerate hermitian form $h(v,w)=v^* \Gg w$ invariant, i.e.
$\Gg$ is an invertible, hermitian matrix. The group $\SL(2,\RR)$ is a special case as it leaves the hermitian form 
given by $G=\smat{0&i\\-i & 0}$ invariant.
If $\Gg$ has signature $(c,d)$ \footnote{i.e. $\Gg$ has $c$ positive and $d$ negative eigenvalues} then one obtains a formula
for the average sum of the first $d$ Lyapunov exponents over a one-parameter family of cocycles.
By conjugation one only has to consider the pseudo-unitary group $\Un(c,d)$.
In fact, the proof in this group is very simple and a main step is a very well known Hilbert-Schmidt type decomposition for matrices within this group as shown in Theorem~\ref{th-LG1}.

Besides these groups an explicit form of the formula will also be stated for the hermitian-symplectic 
\footnote{This is different from the (complex)-symplectic group $\Sp(2d,\CC)$. 
$\Sp(2d,\CC)$ leaves the bilinear form given by $\Gg={G\otimes \one_d}$ invariant, i.e. $\Tt^\top \Gg \Tt = \Gg$, instead
of $\Tt^* \Gg \Tt=\Gg$.}
group $\HSp(2d)$ leaving the hermitian form given by \hbox{$G\otimes \one_d$} invariant.
 Such cocycles appear naturally in the theory of random and quasi-periodic Schr\"odinger operators.
In the case $d=1$, the formula for $\HSp(2d)$ cocycles reduces exactly to the Herman-Avila-Bochi formula for $\SL(2,\RR)$ cocycles.

Recent papers show some interest in higher dimensional cocycles and such quasi-periodic operators, e.g. 
\cite{AJS,DK1,DK2,HP,Sch}. 
The Herman-Avila-Bochi formula proved to be a useful tool for the theory of $\SL(2,\RR)$ cocycles and the presented
result may lead to some generalizations.

\subsection{Pseudo-unitary cocycles}

The Lorentz group or pseudo-unitary group of signature $(c,d)$, denoted by $\Un(c,d)$, is given by 
\begin{equation}
 \Un(c,d)=\left\{ \Tt\in\Mat(c+d,\CC)\,:\, \Tt^* \Gg_{c,d} \Tt = \Gg_{c,d}\right\}\;,\;
 \Gg_{c,d}=\pmat{\one_{c} & \nul \\ \nul & -\one_{d} }\,,
\end{equation}
i.e. it is the group of linear transformations, that leave the standard hermitian form of signature $(c,d)$ invariant.

\vspace{.2cm}

Let $(\XX,\Aaa,\mu)$ be a probability space and let $f:\XX\to\XX$ be a measure-preserving map,
i.e. for any $g\in L^1(d\mu)$ one has $\int_\XX g(f(x))d\mu(x)=\int_\XX g(x)d\mu(x)$.

Moreover, let $\Aa:\XX\to\Un(c,d)$ be a measurable map such that 
$x\mapsto\ln\|\Aa(x)\|$ is $\mu$-integrable, i.e. $\ln\|\Aa(\cdot) \|\in L^1(d\mu)$.
The set of such functions $\Aa$ will be denoted by $\Ll\Ii(\XX,\Un(c,d))$ (for logarithmic integrable).
This condition is sufficient for the existence of the Lyapunov exponents.
Then the pair $(f,\Aa)$ interpreted as map
\begin{equation}
 (f,\Aa)\,:\, (x,v) \in \XX \times\CC^{c+d} \mapsto (f(x),\Aa(x)v)\,.
\end{equation}
defines a $\Un(c,d)$-cocycle over $\XX$.
The iteration of this map gives $(f,\Aa)^n=(f^n,\Aa_n)$ where
\begin{equation}
 \Aa_n(x)=\Aa(f^{n-1}(x))  \cdots \Aa(f(x)) \Aa(x)\,.
\end{equation}
The Lyapunov exponents are defined by
\begin{equation}
 L_k(f,\Aa)= \lim_{n\to\infty} \frac1n \int_\XX \ln(\sigma_k(\Aa_n(x)))\,d\mu(x)
\end{equation}
where $\sigma_k(\Aa)$ denotes the $k$-th singular value of $\Aa$. We also define
\begin{equation}
 L^k(f,\Aa):=\sum_{j=1}^k L_j(f,\Aa)=L_1(f,\Lambda^k \Aa)\,.
\end{equation}
The existence of the Lyapunov exponents is guaranteed by Kingman's subadditive theorem, in fact,
for $\mu$ almost every $x\in\XX$ the function
\begin{equation}
 L^k(f,\Aa,x)=\lim_{n\to\infty} \frac1n\ln\|\Lambda^k(\Aa_n(x))\|\,
\end{equation}
exists and $ L^k(f,\Aa) =  \int_\XX L^k(f,\Aa,x)\,d\mu(x)$.

\vspace{.2cm}

I will consider averages of a one-parameter family by multiplying with certain unitary matrices.
Therefore, for $\theta\in[0,1]$ let us define
\begin{equation}
 \Uu^{(c,d)}_\theta =
 \pmat{e^{2\pi i \theta} \one_{c} & \nul \\ \nul & \one_{d}}\;\in\;\Un(c,d)\,.
\end{equation}

Analogue to the function $N(A)$ for $A\in \SL(2,\RR)$ introduced in \cite{AB}, for $\Tt\in\GL(m,\CC)$
and $\NN\ni r\leq m$ let us define the functions
\begin{equation}
 N_r(\Tt) = \sum_{i=1}^r \ln\left(\frac{\sigma_i(\Tt)+(\sigma_i(\Tt))^{-1}}2 \right)\,.
\end{equation}

Since $\sigma_i(\Tt)\geq 1$ for $i=1,\ldots,d$ for $\Tt\in\Un(c,d)$ (cf. Theorem~\ref{th-LG1}) one has
\begin{equation} \label{eq-rel-N}
 \ln(\|\Lambda^d \Tt\|)-d\ln(2) \leq N_d(\Tt) \leq \ln(\|\Lambda^d \Tt\|)\,.
\end{equation}

One obtains the following analogue to the Herman-Avila-Bochi formula.

\begin{theorem}\label{th-U-1}
 Let $\Aa\in \Ll\Ii(\XX,\Un(c,d))$, then one has
 \begin{equation}\label{eq-st-U-0}
  \int_0^1 L^{d}(f,\Uu^{(c,d)}_\theta\Aa)\,d\theta\,=\, \int_0^1 N_{d}(\Aa(x))\,d\mu(x)\;.
 \end{equation}
\end{theorem}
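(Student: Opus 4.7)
The plan is to extend $\Uu^{(c,d)}_\theta$ to the holomorphic family $\Uu(z):=\diag(z\one_c,\one_d)$, $z\in\CC$, and to study the bottom-right $d\times d$ block $F_n(z,x)$ of
\[
 \M_n(z,x) \,:=\, \Uu(z)\Aa(f^{n-1}x)\,\Uu(z)\Aa(f^{n-2}x)\,\cdots\,\Uu(z)\Aa(x)\,,
\]
whose entries are polynomials in $z$. Two facts drive the argument. First, via the Hilbert--Schmidt decomposition $\Aa=K_1DK_2$ of Theorem~\ref{th-LG1}, the bottom-right $d\times d$ block $e(T)$ of any $T\in\Un(c,d)$ satisfies $|\det e(T)|=\prod_i\cosh s_i(T)=\exp N_d(T)$. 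Second, block multiplication shows $\Uu(0)\Aa=\smat{\nul & \nul \\ * & e(\Aa)}$, which propagates to $\M_n(0,x)=\smat{\nul & \nul \\ * & E_n(x)}$ with $E_n(x):=e(\Aa(f^{n-1}x))\cdots e(\Aa(x))$, so $\ln|\det F_n(0,x)|=\sum_{i=0}^{n-1}N_d(\Aa(f^i x))$.

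The key step is to establish that $\det F_n(\cdot,x)$ is zero-free on the closed unit disk $\{|z|\le 1\}$. For $|z|\le 1$ one has $\Uu(z)^*\Gg_{c,d}\Uu(z)-\Gg_{c,d}=(|z|^2-1)\diag(\one_c,\nul)\le 0$, so $\Uu(z)$ is $\Gg_{c,d}$-non-increasing; since each $\Aa(f^ix)\in\Un(c,d)$ preserves $\Gg_{c,d}$, the composition $\M_n(z,x)$ is also $\Gg_{c,d}$-non-increasing. Applied to the strictly $\Gg_{c,d}$-negative vector $v=(\nul,v_2)^\top$ with $v_2\neq\nul$, and writing $\M_n(z,x)v=(B_n(z,x)v_2,F_n(z,x)v_2)^\top$ where $B_n$ is the top-right $c\times d$ block of $\M_n$, one gets
\[
 |F_n(z,x)v_2|^2 - |B_n(z,x)v_2|^2 \,\ge\, |v_2|^2 \,>\, 0\,,
\]
hence $F_n(z,x)v_2\neq \nul$ and $F_n(z,x)$ is invertible. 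Consequently $\ln|\det F_n(\cdot,x)|$ is harmonic on the open disk and continuous on its closure, and the mean-value property gives the exact identity
\[
 \int_0^1 \ln|\det F_n(e^{2\pi i\theta},x)|\,d\theta \,=\, \sum_{i=0}^{n-1} N_d(\Aa(f^i x))\,.
\]

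To conclude, note that for $|z|=1$ the matrix $\M_n(z,x)$ lies in $\Un(c,d)$ with $e(\M_n(z,x))=F_n(z,x)$, so the identity $|\det e|=\exp N_d$ combined with \eqref{eq-rel-N} yields $0\le\ln\|\Lambda^d\M_n(e^{2\pi i\theta},x)\|-\ln|\det F_n(e^{2\pi i\theta},x)|\le d\ln 2$. Integrating over $(\theta,x)\in[0,1]\times\XX$, dividing by $n$, and using the $f$-invariance of $\mu$, one obtains
\[
 \int_\XX N_d(\Aa(x))\,d\mu \,\le\, \frac1n\int_0^1\!\!\int_\XX \ln\|\Lambda^d\M_n(e^{2\pi i\theta},x)\|\,d\mu\,d\theta \,\le\, \int_\XX N_d(\Aa(x))\,d\mu + \tfrac{d\ln 2}{n}\,.
\]
Taking $n\to\infty$ and exchanging the $d\theta$-integral with the $n^{-1}\int d\mu$-limit (by Fubini and dominated convergence, using the $L^1$-dominant $\frac{d}{n}\sum_i\ln\|\Aa(f^ix)\|$) identifies the middle expression with $\int_0^1 L^d(f,\Uu^{(c,d)}_\theta\Aa)\,d\theta$, yielding \eqref{eq-st-U-0}. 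The principal obstacle is the zero-free claim for $\det F_n(\cdot,x)$: this is what upgrades a general subharmonic lower bound to an exact mean-value identity, and it is where the $\Un(c,d)$-structure is essentially used, through the strict $\Gg_{c,d}$-contraction property of $\Uu(z)$ on the unit disk.
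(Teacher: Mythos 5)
Your proof is correct, and while it shares the paper's overall skeleton (extend $\Uu^{(c,d)}_\theta$ to the holomorphic family $\diag(z\one_c,\one_d)$, exploit harmonicity and the mean-value property, evaluate at $z=0$ where the product collapses onto the lower-right blocks, then pass to Lyapunov exponents by subadditivity and dominated convergence), the way you establish harmonicity is genuinely different. The paper constructs a holomorphically varying invariant subspace $\WW(z)$ of the product via the Earle--Hamilton fixed point theorem on the classical domain $R_I(c,d)$ together with Montel's theorem, and takes $\ln(\rho(\Lambda^d\Dd(z)))=\ln|\det D_\WW(z)|$ as the harmonic function; on the unit circle it then needs a second averaging (a double integral in $\theta,\theta'$) to convert the spectral-radius identity \eqref{eq-st-U-2} into the $N_d$-identity \eqref{eq-st-U-1}. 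You instead work directly with the corner block $F_n(z,x)$, whose determinant is a polynomial, and prove it is zero-free on $\overline\DD$ by the elementary quadratic-form estimate $\M_n(z,x)^*\Gg_{c,d}\M_n(z,x)\le\Gg_{c,d}$ applied to vectors $\smat{\nul\\v_2}$; on the circle the boundary values are then \emph{directly} $N_d(\M_n(e^{2\pi i\theta},x))$ by \eqref{eq-det-D}, so no second averaging is needed. Your route is more elementary --- no Grassmannian, no fixed-point theorem, no normal families --- and in effect reproves \eqref{eq-st-U-1} along the way; what it does not deliver is the spectral-radius statement \eqref{eq-st-U-2}, which the paper's invariant-subspace construction gives for free (and which is of independent interest), nor the holomorphic dependence of the dominated splitting used in the paper's remark on harmonicity of $z\mapsto L^d(f,\Bb(z)\Aa)$. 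The final limiting argument (uniform bound $0\le\frac1n\int_\XX\ln\|\Lambda^d\M_n\|\,d\mu\le d\int_\XX\ln\|\Aa\|\,d\mu$ plus dominated convergence in $\theta$) is essentially identical to the paper's proof of Theorem~\ref{th-U-1}.
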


This will follow immediately from the definition of $L^d$, equation \eqref{eq-rel-N} and the 
following fact:

\begin{theorem}\label{th-U-2}
Let $\Tt_1,\ldots,\Tt_n \in \Un(c,d)$ then one finds
\begin{equation}\label{eq-st-U-1}
 \int_0^1 N_{d}(\Uu^{(c,d)}_\theta \Tt_1 \Uu^{(c,d)}_\theta \Tt_2\ldots \Uu^{(c,d)}_\theta \Tt_n)\,d\theta\,=\,
 \sum_{j=1}^n N_{d}(\Tt_j)
\end{equation}
and
\begin{equation} \label{eq-st-U-2}
 \int_0^1 \ln(\rho(\Lambda^{d}(\Uu^{(c,d)}_\theta \Tt_1 \Uu^{(c,d)}_\theta \Tt_2\ldots \Uu^{(c,d)}_\theta \Tt_n)))\,d\theta\,=\,
 \sum_{j=1}^n N_{d}(\Tt_j)
\end{equation}
where $\rho(\cdot)$ denotes the spectral radius.
\end{theorem}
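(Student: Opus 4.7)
The plan is to pass to the complex variable $z=e^{2\pi i\theta}$ and combine Jensen's formula with the subharmonicity of the logarithm of the spectral radius. Writing $\Ll(z):=\diag(z\one_c,\one_d)$, so that $\Ll(e^{2\pi i\theta})=\Uu^{(c,d)}_\theta$, the matrix
$$M(z)\;:=\;\Ll(z)\Tt_1\Ll(z)\Tt_2\cdots\Ll(z)\Tt_n$$
is a polynomial in $z$ that takes values in $\Un(c,d)$ on $|z|=1$. As a preliminary observation extracted from the Hilbert--Schmidt/$KAK$ decomposition of Theorem~\ref{th-LG1}, I record the identity
$$N_d(\Tt)\;=\;\ln|\det[\Tt]_{22}|\qquad\text{for all }\Tt\in\Un(c,d),$$
where $[\Tt]_{22}$ denotes the bottom-right $d\times d$ block: a diagonal representative has $[\Tt]_{22}=\diag(\cosh t_i)$ with hyperbolic parameters $t_i$, and left/right multiplication by elements of the maximal compact $\Un(c)\times\Un(d)$ leaves $|\det[\Tt]_{22}|$ unchanged.

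The crucial dynamical fact is that for $|z|\leq 1$ one has $\Ll(z)^*\Gg_{c,d}\Ll(z)=\diag(|z|^2\one_c,-\one_d)\leq\Gg_{c,d}$, so $\Ll(z)$ is form-contracting; hence $M(z)^*\Gg_{c,d}M(z)\leq\Gg_{c,d}$ throughout the closed unit disk, and $M(z)$ sends the open negative cone $V^-=\{v:v^*\Gg_{c,d}v<0\}$ into itself. From this I deduce the key claim that $p(z):=\det[M(z)]_{22}$ has no zeros in $\overline{\DD}$: if $[M(z)]_{22}w=0$ for some nonzero $w$, then $v=(0,w)\in V^-$ while $M(z)v=(M_{12}(z)w,0)$ satisfies $(M(z)v)^*\Gg_{c,d}(M(z)v)=|M_{12}(z)w|^2\geq 0$, contradicting $M(z)v\in V^-$. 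A direct computation gives $[M(0)]_{22}=D_1D_2\cdots D_n$ with $D_j=[\Tt_j]_{22}$, so $|p(0)|=\prod_j|\det D_j|=e^{\sum_j N_d(\Tt_j)}$. Jensen's formula applied to $p$ (a polynomial nonvanishing on $\overline{\DD}$) then yields \eqref{eq-st-U-1}.

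For \eqref{eq-st-U-2} I bracket the integral from both sides. The function $g(z):=\ln\rho(\Lambda^d M(z))$ is subharmonic on $\CC$ (Aupetit--Slodkowski for logs of spectral radii of holomorphic matrix functions), and the nonzero eigenvalues of $M(0)=\smat{\nul&\nul\\ *& D_1\cdots D_n}$ are precisely those of $D_1\cdots D_n$, so $\rho(\Lambda^d M(0))=|\det(D_1\cdots D_n)|=e^{\sum_j N_d(\Tt_j)}$; the submean inequality then gives $\sum_j N_d(\Tt_j)\leq\int_0^1 g(e^{2\pi i\theta})d\theta$. For the matching upper bound I iterate: $M(z)^k$ is itself of the form $\Ll(z)\Tt'_1\cdots\Ll(z)\Tt'_{kn}$ (concatenate $k$ copies of $\Tt_1,\ldots,\Tt_n$), so the already proved \eqref{eq-st-U-1} yields $\int_0^1 N_d(M(z)^k)d\theta=k\sum_j N_d(\Tt_j)$. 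Combining \eqref{eq-rel-N}, Gelfand's formula $\rho(\Lambda^d M(z))=\lim_k\|\Lambda^d M(z)^k\|^{1/k}$, and Fatou's lemma (the integrands $\frac{1}{k}\ln\|\Lambda^d M(z)^k\|$ are nonnegative on the unit circle since $M(z)^k\in\Un(c,d)$ there) produces $\int_0^1\ln\rho(\Lambda^d M(z))d\theta\leq\sum_j N_d(\Tt_j)$, matching the lower bound and completing the proof.

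The main obstacle is the key claim that $\det[M(z)]_{22}$ is non-vanishing on all of $\overline{\DD}$; a direct analysis of the matrix polynomial does not make this transparent, but the form-contraction argument above reduces it to a one-line contradiction based on the structure of the negative cone.
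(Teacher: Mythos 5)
Your proof is correct, but it takes a genuinely different route from the paper's. The paper constructs, via the Earle--Hamilton fixed point theorem and Montel's theorem, a holomorphically varying $d$-dimensional invariant subspace $\WW(z)$ attracting under the M\"obius action of $\Dd(z)$ on the classical domain $R_I(c,d)$; this makes $z\mapsto\ln\rho(\Lambda^d\Dd(z))=\ln|\det D_\WW(z)|$ harmonic on all of $\DD$, so the mean value property yields \eqref{eq-st-U-2} \emph{first}, and \eqref{eq-st-U-1} is then deduced by an extra averaging over a second parameter $\theta'$. You reverse the order: your cone argument (that $\Ll(z)^*\Gg_{c,d}\Ll(z)\leq\Gg_{c,d}$ forces $[M(z)]_{22}$ to be invertible on all of $\overline\DD$) is the same geometric fact the paper uses --- $\Bb(z)\overline\Sb\subset\Sb$ --- but exploited at the level of a single determinant rather than through holomorphic dynamics on the Grassmannian, and it delivers \eqref{eq-st-U-1} directly from Jensen's formula (indeed just the mean value property of the harmonic function $\ln|\det[M(z)]_{22}|$). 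Your derivation of \eqref{eq-st-U-2} by sandwiching --- Vesentini/Aupetit subharmonicity of $\ln\rho\circ\Lambda^d M$ for the lower bound, and iteration of \eqref{eq-st-U-1} combined with \eqref{eq-rel-N}, Gelfand's formula and Fatou for the upper bound --- is a clean replacement for the paper's identification of $\rho(\Lambda^d\Dd(z))$ with $|\det D_\WW(z)|$. What you gain is a more elementary and self-contained proof of \eqref{eq-st-U-1} (no fixed-point theorem, no normal families); what you give up is the stronger structural conclusion of the paper's lemma (exact harmonicity of $\ln\rho(\Lambda^d\Dd(z))$ on $\DD$, and the holomorphic invariant subspace that underlies the paper's remark on harmonicity of $L^d(f,\Bb(z)\Aa)$), and you import one external result (subharmonicity of the spectral radius of a holomorphic matrix family). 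Two cosmetic points: your identity $N_d(\Tt)=\ln|\det[\Tt]_{22}|$ is exactly \eqref{eq-det-D} of Theorem~\ref{th-LG1} and should be cited as such, and like the paper you should note the harmless reduction to $c\geq d$ when invoking that theorem.
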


\begin{rem} 
Let $\Tt\in \Un(c,d)$, $c\geq k > d$, then  $\sigma_k(\Tt)=1$ for $c\geq k > d$. 
Hence, one actually obtains $N_k(\Tt)=N_{d}(\Tt)$ and $L^k(f,\Aa)=L^{d}(f,\Aa)$ for any $k$ between $d$ and $c$.
Therefore, one could replace $d$ by any $k$ between $d$ and $c$ in 
\eqref{eq-st-U-0} and \eqref{eq-st-U-1}.
\end{rem}

For groups leaving a general non-degenerate hermitian form invariant one immediately obtains the following.

\begin{coro}
Let $\Gg$ be any invertible, $(c+d)\times(c+d)$ hermitian-matrix with signature $(c,d)$ and $\GG$ the group of matrices
leaving the form $v^* \Gg w$ invariant, i.e. $\GG=\left\{ \Tt \,:\,\Tt^* \Gg \Tt = \Gg \right\}$.
Then, there exists an invertible matrix $\Bb$ such that
for $\Aa\in\Ll\Ii(\XX,\GG)$ one has
\begin{equation}\label{eq-gen-GG}
 \int_0^1 L^d(f, \Bb^{-1} \Uu^{(c,d)}_\theta \Bb\, \Aa)\,d\theta\,=\,\int_\XX N_d(\Bb\,\Aa(x) \Bb^{-1})\,d\mu(x)\,.
\end{equation}
\end{coro}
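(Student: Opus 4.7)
The proof is expected to reduce directly to Theorem~\ref{th-U-1} by change of basis. The plan is as follows.

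First, I would use the spectral theorem for the hermitian matrix $\Gg$. Since $\Gg$ has signature $(c,d)$, Sylvester's law of inertia produces an invertible matrix $\Bb$ satisfying $\Bb^*\Gg_{c,d}\Bb=\Gg$, equivalently $(\Bb^{-1})^*\Gg\,\Bb^{-1}=\Gg_{c,d}$. A short computation then shows that conjugation by $\Bb$ is a group isomorphism $\GG\to\Un(c,d)$: if $\Tt\in\GG$ and $\Ss=\Bb\Tt\Bb^{-1}$, then
\begin{equation*}
\Ss^*\Gg_{c,d}\Ss=(\Bb^{-1})^*\Tt^*\Bb^*\Gg_{c,d}\Bb\Tt\Bb^{-1}=(\Bb^{-1})^*\Tt^*\Gg\,\Tt\Bb^{-1}=(\Bb^{-1})^*\Gg\,\Bb^{-1}=\Gg_{c,d}.
\end{equation*}
In particular, $\tilde\Aa(x):=\Bb\,\Aa(x)\,\Bb^{-1}$ takes values in $\Un(c,d)$, and the bounds $\|\Bb\|^{-1}\|\Bb^{-1}\|^{-1}\|\Aa(x)\|\leq\|\tilde\Aa(x)\|\leq\|\Bb\|\|\Bb^{-1}\|\|\Aa(x)\|$ show that $\tilde\Aa\in\Ll\Ii(\XX,\Un(c,d))$.

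Next I would apply Theorem~\ref{th-U-1} to $\tilde\Aa$, which yields
\begin{equation*}
\int_0^1 L^d(f,\Uu^{(c,d)}_\theta\tilde\Aa)\,d\theta=\int_\XX N_d(\tilde\Aa(x))\,d\mu(x)=\int_\XX N_d(\Bb\,\Aa(x)\,\Bb^{-1})\,d\mu(x),
\end{equation*}
which is the desired right-hand side. It remains to identify the left-hand side with the integral in \eqref{eq-gen-GG}. For this, observe that the $n$-step cocycle matrix for $\Bb^{-1}\Uu^{(c,d)}_\theta\Bb\,\Aa$ equals $\Bb^{-1}\bigl(\prod_{k=0}^{n-1}\Uu^{(c,d)}_\theta\tilde\Aa(f^k x)\bigr)\Bb$, i.e., it differs from the iterate of the $\Uu^{(c,d)}_\theta\tilde\Aa$ cocycle only by the constant conjugation by $\Bb$. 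Passing to the $d$-th exterior power and using $\|\Lambda^d\Bb^{-1}\|^{-1}\|\Lambda^d\Bb\|^{-1}\leq\|\Lambda^d(\Bb^{-1}\M\Bb)\|/\|\Lambda^d\M\|\leq\|\Lambda^d\Bb^{-1}\|\|\Lambda^d\Bb\|$, the $\frac{1}{n}\ln$-asymptotics are unchanged. Therefore $L^d(f,\Bb^{-1}\Uu^{(c,d)}_\theta\Bb\,\Aa)=L^d(f,\Uu^{(c,d)}_\theta\tilde\Aa)$ for every $\theta$, and integrating in $\theta$ (justified by $\ln$-integrability bounds that are uniform in $\theta$ since $\Uu^{(c,d)}_\theta$ is unitary and hence $\|\Uu^{(c,d)}_\theta\|=1$) finishes the proof.

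There is no genuine obstacle here; the only points requiring care are the direction of conjugation (so that $\Bb^{-1}\Uu^{(c,d)}_\theta\Bb$ actually lies in $\GG$, which forces the normalization $\Bb^*\Gg_{c,d}\Bb=\Gg$) and the verification that constant conjugation and the unitarity of $\Uu^{(c,d)}_\theta$ preserve the $\Ll\Ii$-condition and the Lyapunov exponents, both of which are standard.
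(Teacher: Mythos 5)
Your proposal is correct and follows exactly the paper's route: produce $\Bb$ with $\Gg=\Bb^*\Gg_{c,d}\Bb$ via Sylvester's law, note $\Bb\,\GG\,\Bb^{-1}=\Un(c,d)$, and apply Theorem~\ref{th-U-1} to $\tilde\Aa=\Bb\Aa\Bb^{-1}$. The paper's proof is a two-line version of the same argument; you have merely filled in the routine details (telescoping of the conjugated cocycle, preservation of the $\Ll\Ii$ condition, and invariance of $L^d$ under constant conjugation) that the paper leaves implicit.
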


\begin{proof}
By diagonalizing $\Gg$ and contractions we find an invertible matrix $\Bb$ such that 
$ \Gg = \Bb^*\Gg_{c,d} \Bb$ and thus
$\Bb \GG \Bb^{-1} = \Un(c,d)$. As $\Bb \Aa \Bb^ {-1} \in \Ll\Ii(\XX,\Un(c,d))$, \eqref{eq-gen-GG} follows.
\end{proof}

%%%%%%%%%%%%%%%%%%%%%%%%%%%%%%%%%%%%%%%%%%%%%%%%%%%%%%%%%%%%%%%%%%%%%%%%%%%%%%%%
%%%%%%%%%%%%%%%%%%%%%%%%%%%%%%%%%%%%%%%%%%%%%%%%%%%%%%%%%%%%%%%%%%%%%%%%%%%%%%%%

\subsection{Hermitian-symplectic cocycles}
 
The hermitian-symplectic group $\HSp(2d)$ is given by
\begin{equation}
 \HSp(2d)=\left\{ \Tt\in\Mat(2d,\CC)\,:\, \Tt^* \Jj \Tt = \Jj\right\}\qtx{where}
 \Jj=\pmat{\nul & \one_d \\ -\one_d & \nul}\,,
\end{equation}
hence it leaves the hermitian form given by $\Gg=i\Jj$ invariant. As $i\Jj$ has signature $(d,d)$,
$\HSp(2d)$ is conjugated to $\Un(d,d)$. The conjugation is actually unitary and given by the Cayley transform, i.e.
\begin{equation}
 \Cc\; \HSp(2d)\; \Cc^{*} = \Un(d,d) \qtx{where} \Cc=\frac1{\sqrt{2}} \pmat{\one_d & i \one_d  \\ \one_d & -i\one_d}\,\in\,\Un(2d)\,.
\end{equation}

Hermitian-symplectic cocycles appear naturally for Schr\"odinger operators on strips. More precisely,
assume $f$ to be invertible\footnote{If $f$ is not invertible one can still define operators on the 'half strip' $\ell^2(\NN)\otimes \CC^d$}, $T \in L^1(\XX,\GL(d,\CC),\,V \in L^1(\XX,\Her(d))$, where $\Her(d)$ denotes the set of hermitian $d\times d$ matrices. Then we get the following family of self-adjoint\footnote{the operators are self-adjoint
for $\mu$-almost all $x$ by the criterion in \cite{SB}} Schr\"odinger operators,
\begin{equation}
(H_x \Psi) = T(f^{n+1}(x)) \Psi_{n+1} + V(f^n(x)) \Psi_n + T^*(f^n(x))) \Psi_{n-1}\,
\end{equation}
on $\ell^2(\ZZ)\otimes\CC^d\,\ni\,\Psi=(\Psi_n)_n$,  $\Psi_n\in\CC^d$.
Solving $H_x \Psi=E\Psi$ leads to
\begin{equation}
 \pmat{T(f^{n+1}(x)) \Psi_{n+1} \\ \Psi_n} = \Tt^E(f^n(x)) \pmat{T(f^n(x)) \Psi_n \\ \Psi_{n-1}}
\end{equation}
where 
\begin{equation}
\Tt^E(x)=\pmat{(E\one_d-V(x))T^{-1}(x) & -T^*(x) \\ T^{-1}(x) & \nul}\,.
\end{equation}
It is not very hard to check that $\Tt^E(x)\in\HSp(2d)$.
The behavior of the generalized eigenvectors (not necessarily in $\ell^2$) is given by the cocycle
$(f,\Tt^E)$ and hence these cocycles are strongly related to the spectral theory of $H_x$. 

\vspace{.2cm}

The role of the unitaries $\Uu^{(c,d)}_\theta$ is played by the following rotation matrices: 
\begin{equation}
 \Rr_\theta= e^{-i \pi \theta} \Cc^* \Uu^{(d,d)}_\theta \Cc =
 \pmat{\cos(\pi\theta)\one_d & -\sin(\pi\theta) \one_d \\ \sin(\pi\theta) \one_d & \cos(\pi\theta) \one_d}
 \,\in\,\HSp(2d)\,.
\end{equation}
As $\Cc$ is unitary, $N_d(\Cc \Tt \Cc^*)=N_d(\Tt)$ and Theorems~\ref{th-U-1} and \ref{th-U-2} immediately imply the following.

\begin{theorem}\label{th-HSp-1}
 Let $\Aa\in \Ll\Ii(\XX,\HSp(2d))$, then
 \begin{equation} 
  \int_0^1 L^d(f,\Rr_\theta\Aa)\,d\theta\,=\,  \int_\XX N_d(\Aa(x))\,d\mu(x)\;.
 \end{equation}
 Let $\Tt_1,\ldots,\Tt_n\in\HSp(2d)$, then
 \begin{equation} \label{eq-st-1}
  \int_0^1 N_d(\Rr_\theta \Tt_1 \Rr_\theta \Tt_2 \ldots \Rr_\theta\Tt_n)\,d\theta
  =\sum_{j=1}^n N_d(\Tt_j)
 \end{equation}
and
\begin{equation} \label{eq-st-2}
  \int_0^1 \ln(\rho(\Lambda^d(\Rr_\theta \Tt_1 \Rr_\theta \Tt_2 \ldots \Rr_\theta\Tt_n)))\,d\theta
  =\sum_{j=1}^n N_d(\Tt_j)\,.
\end{equation}
\end{theorem}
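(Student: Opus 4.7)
The plan is to reduce Theorem~\ref{th-HSp-1} directly to Theorems~\ref{th-U-1} and \ref{th-U-2} via the Cayley transform $\Cc$, which unitarily intertwines $\HSp(2d)$ with $\Un(d,d)$. The crucial input is the relation $\Rr_\theta = e^{-i\pi\theta}\Cc^{*}\Uu^{(d,d)}_\theta\Cc$ stated just before the theorem, combined with $\Cc\,\HSp(2d)\,\Cc^{*}=\Un(d,d)$ and the unitarity of $\Cc$.

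First I would unravel the telescoping product by an induction on $n$,
\[
\Rr_\theta \Tt_1 \Rr_\theta \Tt_2 \cdots \Rr_\theta \Tt_n
\;=\; e^{-in\pi\theta}\,\Cc^{*}\bigl(\Uu^{(d,d)}_\theta \Tt'_1\,\Uu^{(d,d)}_\theta \Tt'_2\cdots \Uu^{(d,d)}_\theta \Tt'_n\bigr)\Cc\,,
\]
where $\Tt'_j := \Cc\,\Tt_j\,\Cc^{*}\in\Un(d,d)$. The scalar phase $e^{-in\pi\theta}$ has modulus one, so it leaves singular values, operator norms of exterior powers and spectral radii untouched; unitary conjugation by $\Cc$ has the same three-fold invariance. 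From this I would conclude that the integrands on the left-hand sides of \eqref{eq-st-1} and \eqref{eq-st-2} coincide $\theta$-pointwise with the corresponding integrands in the $\Un(d,d)$-statements \eqref{eq-st-U-1} and \eqref{eq-st-U-2} applied to the $\Tt'_j$'s. Invoking Theorem~\ref{th-U-2} and using $N_d(\Tt'_j)=N_d(\Tt_j)$ (again since $\Cc$ is unitary) delivers the two finite-product identities.

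For the Lyapunov exponent equality I would argue in the same spirit: the $n$-step cocycle iterate $(\Rr_\theta \Aa)_n(x)$ differs from $(\Uu^{(d,d)}_\theta \Cc\Aa\Cc^{*})_n(x)$ only by a unimodular scalar and by unitary conjugation by $\Cc$, so these two matrices share all singular values at every time $n$ and every base point $x$. Consequently $L^d(f,\Rr_\theta\Aa)=L^d(f,\Uu^{(d,d)}_\theta\,\Cc\Aa\Cc^{*})$, and since $\Cc\Aa\Cc^{*}\in\Ll\Ii(\XX,\Un(d,d))$, Theorem~\ref{th-U-1} applied to this conjugated cocycle yields the stated equality with $\int_\XX N_d(\Cc\Aa(x)\Cc^{*})\,d\mu(x)=\int_\XX N_d(\Aa(x))\,d\mu(x)$.

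I do not anticipate any genuine obstacle here; the argument is essentially bookkeeping. The only point requiring care is the accumulation of the $n$ scalar factors $e^{-i\pi\theta}$ in the induction, but since $|e^{-in\pi\theta}|^k=1$ for every $k$ this phase cancels at each of the three places where it could enter (singular values, norm of $\Lambda^d$, spectral radius of $\Lambda^d$), and no further structural ingredient beyond Theorems~\ref{th-U-1}, \ref{th-U-2} and the unitarity of $\Cc$ is needed.
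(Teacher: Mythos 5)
Your proposal is correct and follows exactly the paper's (very brief) argument: the paper derives Theorem~\ref{th-HSp-1} from Theorems~\ref{th-U-1} and \ref{th-U-2} precisely via the identity $\Rr_\theta=e^{-i\pi\theta}\Cc^*\Uu^{(d,d)}_\theta\Cc$, the unitarity of $\Cc$, and the invariance of $N_d$, singular values, and $\rho(\Lambda^d\cdot)$ under unimodular scalars and unitary conjugation. You have simply spelled out the bookkeeping the paper compresses into ``immediately imply.''
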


\begin{rem}
 The case $d=1$ corresponds exactly to the Herman-Avila-Bochi formula for $\SL(2,\RR)$ cocycles as
 $\HSp(2)=\{e^{i\varphi} A\,:\,\varphi\in\RR, A\in\SL(2,\RR)\}=S^1\cdot \SL(2,\RR)$.
 %The possible phase factor $e^{i\varphi}$ has of course no influence on the Lyapunov exponents.
\end{rem}

%\begin{theorem}\label{th-U-3}
% Let $\alpha \not\in\QQ$. For a dense set $\Vv\subset C^\omega(\RR/\ZZ,\Un(d,d))$ and $\Aa\in\Vv$ one has
% $L_1(\alpha,\Aa)>0$.
%\end{theorem}

\section{Structure of pseudo-unitary matrices}

The most crucial fact is the following decomposition:

\begin{theorem}\label{th-LG1}
 For $\Tt\in \Un(c,d)$, $c\geq d$, there exist unitary $c\times c$ matrices $U_1,\, V_1 \in \Un(c)$ and
 unitary $d\times d$ matrices $U_2,\, V_2 \in \Un(d)$ and a real diagonal
 $d\times d$ matrix $\Gamma=\diag(\gamma_1,\gamma_2,\ldots,\gamma_d)>\nul$ with $\gamma_i\geq\gamma_{i+1}$ such that 
 \begin{equation}\label{eq-U(d,d)-str}
  \Tt=\pmat{U_1 & \nul \\ \nul & U_2} \pmat{\cosh(\Gamma) & \nul & \sinh(\Gamma) \\ 
  \nul & \one_{c-d} & \nul \\ \sinh(\Gamma) & \nul & \cosh(\Gamma)}
  \pmat{V_1 & \nul \\ \nul & V_2}\,.
 \end{equation}
 In particular, $\sigma_i(\Tt)=e^{\gamma_i}$, $\sigma_{d+c+1-i}(\Tt)=e^{-\gamma_i}=(\sigma_i(\Tt))^{-1}$, for $i=1,\ldots,d$, 
 and $\sigma_i(\Tt)=1$, for $d<i\leq c$, are the $d+c$ singular values of $\Tt$. Thus, the matrix $\Gamma$ is uniquely determined.
 If $\Tt=\pmat{A&B\\C&D}$, where $A \in \Mat(c,\CC),\,D\in\Mat(d,\CC)$, then one finds $D=U_2 \cosh(\Gamma) V_2$ implying
  \begin{equation}\label{eq-sigma-rel}
  \sigma_i(D)= \cosh(\gamma_i)=\frac12 \left(\sigma_i(\Tt) + (\sigma_{i}(\Tt))^{-1} \right)\,\quad
  \text{for $i=1,\ldots,d$}
 \end{equation}
 and
 \begin{equation}\label{eq-det-D}
  |\det(D)| = \det(\cosh(\Gamma))=\exp(N_d(\Tt))\,.
 \end{equation}
\end{theorem}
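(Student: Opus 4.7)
My plan is to establish a polar-type decomposition $\Tt = U P$ inside $\Un(c,d)$, with $U \in K := \Un(c) \times \Un(d)$ (block-diagonally embedded) and $P \in \Un(c,d)$ positive definite, and then to canonicalize $P$ by a $K$-conjugation so that it becomes exactly the middle factor in \eqref{eq-U(d,d)-str}.

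For the first step I set $P := (\Tt^* \Tt)^{1/2}$ and $U := \Tt P^{-1}$. Rewriting the $\Un(c,d)$-relation as $\Tt^{-1} = \Gg_{c,d}\,\Tt^*\,\Gg_{c,d}$ gives at once $\Gg_{c,d}\,(\Tt^*\Tt)\,\Gg_{c,d} = (\Tt^*\Tt)^{-1}$. By uniqueness of the positive square root this lifts to $\Gg_{c,d}\,P\,\Gg_{c,d} = P^{-1}$, so $P \in \Un(c,d)$, and hence $U \in \Un(c,d)$. A short calculation yields $U^* U = P^{-1}(\Tt^*\Tt)P^{-1} = \one$, so $U$ is ordinary unitary; combined with $U^* \Gg_{c,d} U = \Gg_{c,d}$ this forces $U$ to commute with $\Gg_{c,d}$, hence $U \in K$.

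The second step canonicalizes $P$. Writing $P = \exp(X)$ with $X$ self-adjoint, the identity $\Gg_{c,d}\,P\,\Gg_{c,d} = P^{-1}$ lifts (again by uniqueness of the logarithm on positive matrices) to $\Gg_{c,d}\,X\,\Gg_{c,d} = -X$, which forces the purely off-diagonal block form $X = \pmat{\nul & M \\ M^* & \nul}$ with $M$ a $c \times d$ matrix. An SVD $M = U_1' \wt{\Sigma}\,(V_2')^{*}$, with $\wt{\Sigma} = \pmat{\Gamma \\ \nul}$ and $\Gamma = \diag(\gamma_1, \ldots, \gamma_d)$ satisfying $\gamma_i \geq \gamma_{i+1} \geq 0$, lets one conjugate $X$ by $\diag(U_1', V_2') \in K$. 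Splitting the first $c$ coordinates into the first $d$ and the remaining $c - d$, the resulting exponent has only four nonzero blocks, and a direct power-series computation exponentiates it to exactly the middle factor of \eqref{eq-U(d,d)-str}. Writing the $K$-element $U$ as $\diag(W_1, W_2)$ and absorbing it into the left $K$-factor yields the full decomposition.

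The remaining claims are direct consequences. Since $K$-multiplication on both sides preserves singular values, $\sigma_i(\Tt)$ equals the $i$-th singular value of the middle factor; as that matrix is positive self-adjoint, its singular values equal its eigenvalues, namely $e^{\pm \gamma_i}$ together with $c-d$ copies of $1$, which gives the stated ordering and the uniqueness of $\Gamma$. Reading off the bottom-right block in \eqref{eq-U(d,d)-str} yields $D = U_2 \cosh(\Gamma) V_2$ immediately, whence \eqref{eq-sigma-rel} and \eqref{eq-det-D} follow by taking singular values and $|\det(\cdot)|$. The principal obstacle is Step 1: the fact that the ordinary positive square root $(\Tt^*\Tt)^{1/2}$ actually lies in $\Un(c,d)$ is not a block-matrix manipulation but rather an appeal to functional calculus applied to the relation $\Gg_{c,d}(\Tt^*\Tt)\Gg_{c,d} = (\Tt^*\Tt)^{-1}$.
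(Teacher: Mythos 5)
Your proof is correct, but it takes a genuinely different route from the paper. You construct the Cartan-type $KAK$ decomposition abstractly: first a polar decomposition $\Tt = UP$ with $P=(\Tt^*\Tt)^{1/2}$, showing via functional calculus that the relation $\Gg_{c,d}(\Tt^*\Tt)\Gg_{c,d}=(\Tt^*\Tt)^{-1}$ descends to the square root and to the self-adjoint logarithm $X$, forcing $P\in\Un(c,d)$, $U\in\Un(c)\times\Un(d)$, and $X$ purely off-diagonal; an SVD of the off-diagonal block of $X$ followed by exponentiation then produces the middle factor. The paper instead works entirely with the block identities $B^*A=D^*C$, $D^*D-B^*B=\one_d$, $AA^*-BB^*=\one_c$ coming from $\Tt^*\Gg_{c,d}\Tt=\Gg_{c,d}$ and $\Tt\Gg_{c,d}\Tt^*=\Gg_{c,d}$: it takes the SVD of $B$ directly, $B=U_1\smat{\sinh(\Gamma)\\ \nul}V_2$, and then \emph{defines} $U_2$, $V_1$ by $D=U_2\cosh(\Gamma)V_2$ and $A=U_1\smat{\cosh(\Gamma) & \nul \\ \nul & \one_{c-d}}V_1$, checking unitarity from the second and third identities and recovering $C$ from the first. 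The paper's argument is purely finite-dimensional linear algebra with no appeal to uniqueness of square roots or logarithms, and it directly ties $\Gamma$ to the singular values of $B$ and $D$; your argument is more conceptual, makes the group-theoretic structure ($K\exp(\mathfrak{p})K$) transparent, and generalizes readily to other classical groups, at the cost of the functional-calculus lemmas you correctly identify as the main burden. One cosmetic point common to both: when $\Tt$ is unitary one gets $\Gamma=\nul$, so the strict inequality $\Gamma>\nul$ in the statement should be read as $\Gamma\geq\nul$; this does not affect either argument.
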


\begin{proof}
 Let $\Tt=\smat{A&B\\C&D}$ written in blocks of size $c$ and $d$. 
 Then $\Tt\in\Un(c,d)$ implies 
 \begin{equation}\label{eq-U(d,d)-id}
  B^*A=D^*C,\quad
 D^*D-B^*B=\one_d,\quad AA^*-BB^*=\one_c
 \end{equation}
For the last equation, note that $\Tt^*\in\Un(c,d)$ as well, hence $\Tt\Gg_{c,d}\Tt^* =\Gg_{c,d}$.\footnote{Indeed, as
$\Gg_{c,d} \Tt^* \Gg_{c,d} \Tt=\Gg_{c,d}^2=\one$, one finds 
$\Tt^{-1}=\Gg_{c,d}\Tt^* \Gg_{c,d}$ and $\Tt\Gg_{c,d}\Tt^*=\Gg_{c,d}^{-1}=\Gg_{c,d}$, giving $\Tt^*\in\Un(c,d)$.}
 The Hilbert Schmidt or singular value decomposition of $B \in \Mat(c\times d,\CC)$ gives 
 \begin{equation}\label{eq-B}
  B=U_1 \pmat{\sinh(\Gamma) \\ \nul} V_2
 \end{equation}
for some unitaries $U_1\in\Un(c),\,V_2\in\Un(d)$ and a diagonal $d\times d$ matrix $\Gamma$ as described above.
By the second equation in \eqref{eq-U(d,d)-id} it follows that $D^*D=V_2^* (\one+\sinh^2(\Gamma)) V_2=V_2^*\cosh^2(\Gamma)V_2$.
Hence, defining $U_2$ by
\begin{equation}\label{eq-D}
 D=U_2 \cosh(\Gamma) V_2
\end{equation}
one sees that $U_2\in\Un(d)$.
Similarly, defining $V_1\in\Mat(c,\CC)$ by
\begin{equation}\label{eq-A}
 A=U_1 \pmat{ \cosh(\Gamma) & \nul \\ \nul & \one_{c-d} } V_1
\end{equation}
and using the third equation in \eqref{eq-U(d,d)-id} one also sees that $V_1 V_1^*=\one_c$, thus
$V_1\in\Un(c)$.
Finally, using the first equation in \eqref{eq-U(d,d)-id} one obtains
\begin{equation}\label{eq-C}
 C={D^*}^{-1} B^* A = U_2 \pmat{\sinh(\Gamma) & \nul}  V_1\,.
\end{equation}
By \eqref{eq-B}, \eqref{eq-D}, \eqref{eq-A} and \eqref{eq-C}, equation \eqref{eq-U(d,d)-str} follows.
\end{proof}

%\vspace{.2cm}

Note, for the special case $c=d$, $\Tt\in\Un(d,d)$, this theorem yields
$$
\Tt=\pmat{U_1 & \nul \\ \nul & U_2}\pmat{\cosh(\Gamma) & \sinh(\Gamma) \\ \sinh(\Gamma) & \cosh(\Gamma) }
\pmat{V_1 & \nul \\ \nul & V_2}\,.
$$

Next we want to consider the actions on $d$-dimensional subspaces, therefore let
$G(d,c+d)$ be the Grassmannian of $d$-dimensional subspaces of $\CC^{c+d}$. Such a subspace $\VV\in G(d,c+d)$ is represented by
a $(c+d)\times d$ matrix $\Phi$ of full rank $d$, where the $d$ column vectors of $\Phi$ span $\VV$.
Two such matrices are equivalent, $\Phi_1\sim\Phi_2$, in the sense that they span the same subspace, if
$\Phi_1=\Phi_2 S $ for $S\in\GL(d,\CC)$. We denote an equivalence class by $[\Phi]_\sim\in G(d,c+d)$.
In fact, $G(d,c+d)$ can be considered as a quotient of Lie groups and defines a holomorphic manifold (cf. \cite[Appendix~A]{AJS}).
The group $\GL(c+d,\CC)$ and hence in particular the group $\Un(c,d)$ acts on $G(d,c+d)$ by
$\Tt [\Phi]_\sim:=[\Tt\Phi]_\sim$.

\begin{defini}
Let us define the following subset of $G(d,c+d)$,
\begin{equation}
 \Sb=\left\{ \left[ \pmat{M \\ \one_d} \right]_\sim\,:\,M\in\Mat(c\times d,\CC)\,,\,M^*M < \one_d \right\} \subset G(d,c+d)\,.
\end{equation} 
This set is the image of the classical domain
\begin{equation}
 R_I(c,d)=\{M\in\Mat(c\times d,\CC)\,:\,M^*M<\one_d\}
\end{equation}
under the holomorphic injection
\begin{equation}
 \varphi: \Mat(c\times d,\CC) \to G(d,c+d)\,,\quad \varphi(M)=\left[\pmat{M \\ \one_d} \right]\,.
\end{equation}
This map can be viewed as a holomorphic chart for $G(d,c+d)$.
\end{defini}

\begin{prop}\label{prop-U(d,d)-Sb}
 The action of $\Un(c,d)$ leaves $\Sb$ invariant, i.e. $\Tt[\Phi]_\sim \in\Sb$
 for $\Tt\in\Un(c,d), \,[\Phi]_\sim \in\Sb$.
\end{prop}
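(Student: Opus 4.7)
The plan is to recognize $\Sb$ as the set of maximal negative-definite subspaces for the hermitian form defined by $\Gg_{c,d}$, and then exploit the fact that $\Un(c,d)$ preserves this form. More precisely, I would first show the following reformulation: if $\Phi$ is a $(c+d)\times d$ matrix of full rank, then $[\Phi]_\sim \in \Sb$ if and only if $\Phi^*\Gg_{c,d}\Phi < \nul$ (negative definite). The ``only if'' direction is a one-line computation: for $\Phi = \smat{M\\ \one_d}$ with $M^*M < \one_d$, one has $\Phi^*\Gg_{c,d}\Phi = M^*M - \one_d < \nul$. The condition $\Phi^*\Gg_{c,d}\Phi<\nul$ is clearly independent of the representative $\Phi$ in its equivalence class, since replacing $\Phi$ by $\Phi S$ for $S\in\GL(d,\CC)$ conjugates the form by $S$.

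The second step is to use $\Tt^*\Gg_{c,d}\Tt = \Gg_{c,d}$ to compute
\begin{equation}
  (\Tt\Phi)^*\Gg_{c,d}(\Tt\Phi) = \Phi^*\Tt^*\Gg_{c,d}\Tt\Phi = \Phi^*\Gg_{c,d}\Phi < \nul,
\end{equation}
so the transformed subspace again carries a negative-definite hermitian form. To complete the proof I must translate this back into the chart $\varphi$, i.e.\ exhibit a representative of $[\Tt\Phi]_\sim$ of the form $\smat{M'\\ \one_d}$ with $(M')^*M'<\one_d$. Writing $\Tt\Phi = \smat{P\\ Q}$ with $P\in\Mat(c\times d,\CC)$ and $Q\in\Mat(d,\CC)$, the previous display expands to $P^*P - Q^*Q < \nul$, hence $Q^*Q > P^*P \geq \nul$. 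In particular $Q^*Q$ is strictly positive definite, so $Q$ is invertible. Then $\Tt\Phi \sim \smat{PQ^{-1}\\ \one_d}$, and the inequality $P^*P < Q^*Q$ conjugated by $Q^{-1}$ from both sides yields $(PQ^{-1})^*(PQ^{-1}) < \one_d$, finishing the proof.

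The only real point to watch is the invertibility of the lower block $Q$, which is the step where one really uses the \emph{strict} negative-definiteness rather than mere non-positivity; fortunately this is automatic from the chain $Q^*Q > P^*P \geq \nul$. Everything else is essentially the invariant definition of $\Sb$ through the form $\Gg_{c,d}$, which makes invariance under $\Un(c,d)$ tautological.
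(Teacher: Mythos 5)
Your proof is correct and follows essentially the same route as the paper: both use the identity $(\Tt\Phi)^*\Gg_{c,d}(\Tt\Phi)=\Phi^*\Gg_{c,d}\Phi$ to deduce $Q^*Q>P^*P\geq\nul$, hence invertibility of the lower block, and then normalize to $\smat{PQ^{-1}\\ \one_d}$ with $(PQ^{-1})^*(PQ^{-1})<\one_d$. Your explicit framing of $\Sb$ as the negative-definite subspaces for $\Gg_{c,d}$ is a slightly more invariant packaging of the same computation.
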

\begin{proof}
Let $M\in R_I(c,d)$, $\Tt\in\Un(c,d)$ and let
\begin{equation}
 \pmat{X\\Y} = \Tt \pmat{M \\ \one_d}\,\quad\,
 X\in\Mat(c\times d,\CC),\;Y\in\Mat(d,\CC)
\end{equation}
Then
\begin{equation}
 \pmat{X\\Y}^* \Gg_{c,d} \pmat{X\\Y} = 
 \pmat{M \\ \one_d}^* \Tt^* \Gg_{c,d} \Tt \pmat{M \\ \one_d} = 
 \pmat{M \\ \one_d}^* \Gg_{c,d} \pmat{M \\ \one_d}
\end{equation}
Hence, $X^*X-Y^*Y=M^*M-\one_d$ and $Y^*Y=X^*X+\one_d-M^*M>X^*X$ is invertible and
\begin{equation}
 \pmat{X\\Y} \sim \pmat{XY^{-1}\\\one_d}\,,\quad
 (XY^{-1})^* (XY^{-1})=\one_d - {Y^{-1}}^*(\one_d-M^*M) Y^{-1} < \one_d
\end{equation}
which finishes the proof.
\end{proof}

As a final note, let us remark the following.

\begin{rem} \label{rm-moebius}
The action of $\Un(c,d)$ on $\Sb$ corresponds to the M\"obius action on the classical domain $R_I(c,d)$.
Hence, for $M\in \Mat(c\times d,\CC)$ with $M^*M<\one_d$ and
$\Tt=\smat{A & B \\ C & D}$ define
\begin{equation}
 \Tt \cdot M = (AM+B)(CM+D)^{-1}\,,
\end{equation}
then
\begin{equation}
\Tt \varphi(M) = \varphi(\Tt \cdot M) \,.
\end{equation}
By the calculation in the proof of Proposition~\ref{prop-U(d,d)-Sb}, the inverse in the M\"obius action exists for 
$M\in R_I(c,d)$.
In fact, the group $\Un(c,d)$ represents exactly the biholomorphic maps on $R_I(c,d)$.
\end{rem}

\section{Analytic dependence  of invariant subspaces}

In this section we will finally prove the main theorems. They will follow from the mean value property
of harmonic functions. We may assume, without loss of generality, that $c\geq d$ as $\Un(c,d)$ and $\Un(d,c)$ are conjugated
groups to each other, $N_d(\Tt)=N_c(\Tt)$ for $\Tt\in\Un(c,d)$, 
and the conjugation maps $\Uu^{(c,d)}_\theta$ to $e^{2\pi i\theta} \Uu^{(d,c)}_{-\theta}$.

\vspace{.2cm}

For $z\in \CC$ let us define
\begin{equation}
 \Bb(z)=\pmat{z \one_c & \nul \\ \nul & \one_d}\,.
\end{equation}
We will consider the cocycles $\Bb(z)\Aa$ for $|z|<1$ and denote the unit disk by
 $\DD:=\{z\in\CC\,:\,|z|<1\}$.
It is obvious that for $z\in\DD$, $\Bb(z)$ maps $\overline{\Sb}$ into $\Sb$, i.e.
$\Bb(z) \overline\Sb \subset \Sb$.
Now let $\Tt_1,\ldots,\Tt_n \in\Un(c,d)$ and let
\begin{equation}
\Dd(z)=\Bb(z) \Tt_1 \Bb(z) \Tt_2 \ldots \Bb(z) \Tt_n \,.
\end{equation}
\begin{lemma}
There is a holomorphic function $\WW:\DD \to G(d,c+d)$ such that $\WW(z)$ is invariant under $\Dd(z)$ and $\WW(z)$ is
associated to the $d$ largest absolute values of eigenvalues of $\Dd(z)$.\footnote{$\WW(z)$ is basically a direct sum of generalized eigenspaces of $\Dd(z)$}
In particular, let $D_\WW(z)$ be the restriction of
$\Dd(z)$ on $\WW(z)$, then
\begin{equation}\label{eq-rh-d}
z \mapsto \ln(\rho(\Lambda^d \Dd(z)))=\ln |\det D_\WW(z) | \,
\end{equation}
is harmonic on $\DD$ and continuous on $\overline \DD$.
\end{lemma}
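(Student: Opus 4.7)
The plan is to construct $\WW(z)$ as the attracting fixed point of the action of $\Dd(z)$ on the bounded domain $\Sb$, and then to deduce harmonicity of $\ln|\det D_\WW(z)|$ from holomorphy of $\WW$ together with invertibility of $D_\WW(z)$ throughout $\DD$.

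First, I would combine $\Bb(z)\overline{\Sb}\subset\Sb$ (noted in the text) with the $\Un(c,d)$-invariance of $\Sb$ from Proposition~\ref{prop-U(d,d)-Sb} to see that $\Dd(z)$ maps $\overline{\Sb}$ strictly into $\Sb$ for every $z\in\DD$. Via the chart $\varphi$ and the M\"obius action of Remark~\ref{rm-moebius}, this translates into a holomorphic self-map of the bounded classical domain $R_I(c,d)$ that carries its closure into its interior. The Earle--Hamilton theorem (or, equivalently, strict contraction with respect to the Kobayashi metric followed by a Banach fixed-point argument) then produces a unique fixed point $M(z)\in R_I(c,d)$, and applying Cauchy integrals (or the holomorphic implicit function theorem) to the fixed-point equation yields holomorphic dependence on $z$. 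Setting $\WW(z):=\varphi(M(z))$ defines a holomorphic map $\DD\to G(d,c+d)$ invariant under $\Dd(z)$.

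Next I would argue that $\WW(z)$ carries the top $d$ eigenvalues of $\Dd(z)$. Because $\Dd(z)$ is a strict contraction of $\Sb$, the iterates $\Dd(z)^k\VV_0$ converge to $\WW(z)$ for any $\VV_0\in\Sb$; thus $\WW(z)$ is the dominant invariant subspace, and its $d$ eigenvalues are precisely the $d$ eigenvalues of $\Dd(z)$ of largest modulus, so $\rho(\Lambda^d\Dd(z))=|\det D_\WW(z)|$. To check $\det D_\WW(z)\neq 0$ on $\DD$: for $z\neq 0$, $\Bb(z)$ is invertible so $\Dd(z)$ and hence $D_\WW(z)$ are invertible; at $z=0$, a direct computation using $\Bb(0)=\pmat{\nul & \nul\\ \nul & \one_d}$ forces $\WW(0)=\varphi(\nul)=\{0\}\oplus\CC^d$, and writing $\Tt_j=\smat{A_j & B_j\\ C_j & D_j}$ one finds $D_\WW(0)=D_1D_2\cdots D_n$, which is invertible since each $D_j$ is invertible by Theorem~\ref{th-LG1}.

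Finally, locally representing $\WW(z)=[\Phi(z)]_\sim$ by a holomorphic full-rank $(c+d)\times d$ matrix $\Phi(z)$, invariance reads $\Dd(z)\Phi(z)=\Phi(z)S(z)$ for a holomorphic $d\times d$ matrix $S(z)$ conjugate to $D_\WW(z)$; hence $\det D_\WW(z)=\det S(z)$ is a nonvanishing holomorphic function on $\DD$, and $\ln|\det D_\WW(z)|$ is harmonic there. Continuity on $\overline{\DD}$ is immediate from continuity of $\Dd$ on $\overline{\DD}$ together with continuity of $\Tt\mapsto\ln\rho(\Lambda^d\Tt)$, via the identity $\ln\rho(\Lambda^d\Dd(z))=\ln|\det D_\WW(z)|$ just established. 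The delicate point is the spectral identification of $\WW(z)$ with the top-eigenvalue subspace, since a spectral-gap argument is awkward when eigenvalues of equal modulus coalesce; this can be sidestepped by establishing $\rho(\Lambda^d\Dd(z))=|\det S(z)|$ on the open dense set where $\Dd(z)$ has simple spectrum and then extending by continuity.
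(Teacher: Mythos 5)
Your proposal is correct and follows essentially the same route as the paper: $\Dd(z)\overline{\Sb}\subset\Sb$, Earle--Hamilton to get the attracting fixed point $M(z)$ in $R_I(c,d)$, identification of $\WW(z)=\varphi(M(z))$ with the dominant invariant subspace, and harmonicity of $\ln|\det D_\WW(z)|$ from holomorphy and nonvanishing. The only cosmetic difference is that the paper obtains holomorphy of $M(z)$ by applying Montel's theorem to the normal family $z\mapsto\Dd^n(z)\cdot\nul$ rather than via the implicit function theorem, and it settles your ``delicate point'' directly by observing that contraction of a neighborhood of $\WW(z)$ in the Grassmannian already forces $\WW(z)$ to carry the eigenvalues of largest modulus.
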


\begin{proof}
 As $\Dd(z) \overline \Sb\subset \Sb$, using the chart $\varphi$, we see that the image of the classical domain
 $R_I(c,d)$ under the M\"obius action of $\Dd(z)$ has compact closure in $R_I(c,d)$. 
 By the Earle-Hamilton fixed point theorem, the map $M\mapsto \Dd(z)\cdot M$ has a unique fixed point $M(z) \in R_I(c,d)$
 and $\Dd^n(z)\cdot M$ converges to $M(z)$ for any $M\in R_I(c,d)$.\footnote{in fact, the Carath\'eodory metric is contracted}
 Therefore, the subspace $\WW(z)=\varphi(M(z))=\lim_{n\to\infty} \varphi(\Dd^n(z) \cdot\nul)$ is invariant under $\Dd(z)$. 
 Clearly, for any $n$, $z\mapsto \Dd^n(z) \cdot \nul $ is holomorphic in $z\in\DD$. 
 As the family of these functions take only values in $R_I(c,d)$, it is a normal family by Montel's theorem. Thus, the limit is holomorphic as well. As the action of $\Dd^n(z)$ on $G(d,c+d)$ contracts a neighborhood of $\WW(z)$, 
 it is clear that $\WW(z)$ 
 is spanned by generalized eigenvectors of $\Dd(z)$ that correspond to the eigenvalues with largest absolute values.
 Choosing the column vectors of $\smat{M(z) \\ \one}$ as a basis for $\WW(z)$,
 $D_\WW(z)$ is represented as an invertible, holomorphic $d\times d$ matrix valued 
 function and one finds
 that $\ln(\rho(\Lambda^d \Dd(z))) = \ln(\rho(\Lambda^d D_\WW(z))) = \ln\left| \det D_\WW(z) \right|$
 is harmonic in $z\in\DD$.
 For the first equality, note that the spectral radius of $\Lambda^d \Dd$ is the product of the $d$ largest absolute values 
 of eigenvalues of $\Dd$ (counted with algebraic multiplicity).
\end{proof}

\begin{rem}
 Assume that $f$ is an invertible transformation, $\Aa\in\Ll\Ii(\XX;\Un(c,d))$. 
 The proof for the holomorphic dependence of $\WW(z)$ and the related harmonic dependence of
 $\ln(\rho(\Lambda^d \Dd(z)))$ can be modified to a proof of harmonic dependence of
 $L^d(f,\Bb(z)\Aa)$ on $z\in\DD$. In fact, $\XX\times\Sb$ is a $d$-conefield,
 $\Aa(x) \overline \Sb \subset \Sb$ shows that the cocycle $(f,\Bb(z)\Aa)$ is $d$-dominated and
 the unstable directions $\WW(x,z)=\lim_{n\to\infty} (\Bb(z)\Aa)_n(f^{-n}(x))[\smat{\nul\\\one}]_\sim$ 
 of the corresponding dominated splitting depend holomorphically on $z$, cf. \cite[Sections~3 and 6]{AJS}.
 Choosing the basis $\smat{\varphi^{-1} (\WW(x,z)) \\ \one }$,  the restriction  $\Bb(z)\Aa(x): \WW(x,z) \to \WW(f(x),z)$ can be written as $d\times d$ matrix $D_\WW(x,z)$, holomorphic in $z$ 
 and $\int_\XX \ln\|D_\WW(x,z)\|\,d\mu(x)<\infty$ uniformly in $z$. 
 Hence, $L^d(f,\Bb(z)\Aa)=\int_\XX \ln |\det D_\WW(x,z)|\,d\mu(x)$ is harmonic in $z$; cf. \cite[Section~2]{Av}.
\end{rem}

Now Theorem~\ref{th-U-2} follows easily:

\renewcommand{\proofname}{Proof of Theorem~\ref{th-U-2}}
\begin{proof}
As $\Bb(e^{2\pi i\theta})  = \Uu^{(c,d)}_\theta$, one has
\begin{equation}\label{eq-pr-U-0}
 \Dd(e^{2\pi i\theta})= \Uu^{(c,d)}_\theta\Tt_1 \Uu^{(c,d)}_\theta\Tt_2 \cdots \Uu^{(c,d)}_\theta\Tt_n
\end{equation}
By harmonicity of $\ln(\rho(\Lambda^d \Dd(z)))$ we have
\begin{equation}\label{eq-pr-U-1}
 \int_0^1 \ln(\rho(\Lambda^d \Dd(e^{2\pi i\theta})))\,d\theta\,=\,
 \ln(\rho(\Lambda^d\Dd(0)))\,.
\end{equation}
Now, using blocks of sizes $c$ and $d$, let
\begin{equation}
 \Tt_j = \pmat{A_j & B_j \\ C_j & D_j} \qtx{then}
 \Bb(0) \Tt_j = \pmat{\nul & \nul \\ C_j & D_j}
\end{equation}
and hence
\begin{equation}
 \Dd(0)=\Qq^{-1} \pmat{\nul & \nul \\ \nul & D_1\cdots D_n} \Qq \qtx{for}
 \Qq=\pmat{\one_c & \nul \\ D_n^{-1} C_n & \one_d}.
\end{equation}
In particular, using \eqref{eq-det-D} this gives
\begin{equation}\label{eq-pr-U-2}
 \ln(\rho(\Lambda^d \Dd(0))) = \ln|\det(D_1\cdots D_n)| = \sum_{j=1}^n \ln|\det(D_j)|= \sum_{j=1}^n N_d(\Tt_j)
\end{equation}
Now, \eqref{eq-pr-U-0}, \eqref{eq-pr-U-1} and \eqref{eq-pr-U-2} together give \eqref{eq-st-U-2}.
Using \eqref{eq-pr-U-0} and \eqref{eq-st-U-2} again one also obtains
\begin{align}
\int_0^1 N_d(\Dd(e^{2\pi i \theta}))\,d\theta & =
\int_0^1 \int_0^1 \ln \rho(\Lambda^d \Uu^{(c,d)}_{\theta'} \Dd(e^{2\pi i\theta}))\, d\theta\, d\theta'\,\notag \\
 &= \int_0^1 N_d(\Uu^{(c,d)}_{\theta'}\Tt_1)\,+\,\sum_{j=2}^n N_d(\Tt_j)\,d\theta' \,=\,
 \sum_{j=1}^n N_d(\Tt_j)
\end{align}
which shows \eqref{eq-st-U-1}\,.
\end{proof}

\renewcommand{\proofname}{Proof of Theorem~\ref{th-U-1}}

\begin{proof}
 By subadditivity we find for $\Aa\in\Ll\Ii(\XX,\Un(c,d))$
\begin{align}
& 0\leq \frac1n  \int_\XX N_d ((\Uu^{(c,d)}_\theta \Aa)_n(x))\,d\mu(x)  \leq
 \frac1n \int_\XX \ln \|\Lambda^d (\Uu^{(c,d)}_\theta \Aa)_n(x) \|\,d\mu(x) \notag \\
 & \quad \leq \int_\XX \ln\|\Lambda^d \Uu^{(c,d)}_\theta \Aa(x)\|\,d\mu(x)\, \leq d \int_\XX \ln \|\Aa(x)\|\,d\mu(x) < \infty
\end{align}
uniformly in $\theta$ and $n$.
Hence, using \eqref{eq-rel-N} we find by Dominated Convergence
 \begin{align}
&  \int_0^1 L^d(f,\Uu^{(c,d)}_\theta\Aa)\,d\theta = \int_0^1 \lim_{n\to\infty} \int_\XX \frac1n N_d((\Uu^{(c,d)}_\theta \Aa)_n(x))\,d\mu(x)\;d\theta
  \notag \\
& \quad = \lim_{n\to\infty} \int_\XX \int_0^1 \frac1n N_d(\Uu^{(c,d)}_\theta \Aa(f^{n-1} x) \cdots \Uu^{(c,d)}_\theta \Aa(x) )\,d\theta\,d\mu(x) \notag \\
& \quad = \lim_{n\to\infty} \int_\XX \frac1n \sum_{j=0}^{n-1} N_d(\Aa(f^j(x)))\,d\mu(x) = \int_\XX N_d(\Aa(x))\,d\mu(x)
 \end{align}
\end{proof}

\end{document}